\def\marginpar#1{\ignorespaces}
\DeclareMathOperator\sgn{sgn}
\newtheorem{theorem}{Theorem}[section]
\newtheorem{lemma}[theorem]{Lemma}
\newtheorem{proposition}[theorem]{Proposition}
\newtheorem{corollary}[theorem]{Corollary}
\numberwithin{equation}{section}
\begin{document}
\title[Argmin process]{The argmin process of random walks and L\'evy processes}

\author[Jim Pitman]{{Jim} Pitman}
\address{Statistics department, University of California, Berkeley. Email: 
} \email{pitman@stat.berkeley.edu}

\author[Wenpin Tang]{{Wenpin} Tang}
\address{Statistics department, University of California, Berkeley. Email: 
} \email{wenpintang@stat.berkeley.edu}

\date{\today} 
\begin{abstract}
In this paper we consider the argmin process of random walks and L\'evy processes. We prove that they enjoy the Markov property, and provide their transition kernels in some special cases. 
\end{abstract}

\maketitle
\textit{Key words :} argmin process, excursion theory, generalized arcsine law, jump process, L\'evy process, Markov property, path decomposition, random walks, stable process.

\textit{AMS 2010 Mathematics Subject Classification: 60G51, 60G52, 60J10.}

\setcounter{tocdepth}{1}
\tableofcontents
\section{Introduction and main results}
\quad In recent work \cite{PTargmin}, we considered the {\em argmin process} $(\alpha_t; t \geq 0)$ of Brownian motion, defined by
\begin{equation}
\alpha_t : = \sup\left\{s \in [0,1]: B_{t+s}=\min_{u \in [0,1]}B_{t+u} \right\} \quad \mbox{for all}~t \geq 0.
\end{equation}
It is easy to see that the process $\alpha$ is stationary, with arcsine distributed invariant measure. We proved that $(\alpha_t; t \geq 0)$ is a Markov process with the Feller property, and computed its transition kernel $Q_t(x,\cdot)$ for $t>0$ and $x \in [0,1]$. 

\quad The purpose of this paper is to extend our previous results to random walks and L\'evy processes. Fix $N \geq 1$. We study the {\em argmin chain} $(A_N(n); n \geq 0)$ of a random walk $(S_n; n \geq 0)$, defined by
\begin{equation}
\label{argminchain}
A_N(n):=\sup \left\{1 \leq i \leq N; S_{n+i} = \min_{1\leq i \leq N} S_{n+i} \right\} \quad \mbox{for all}~n \geq 0,
\end{equation}
where $S_n:=\sum_{i=1}^n X_i$ is the $n^{th}$ partial sum of $(X_n; n \geq 0)$ (with convention $S_0:=0$), and $(X_n; n \geq 0)$ is a sequence of independent and identically distributed random variables with the cumulative distribution function $F$. This is  the discrete analog of the argmin process of Brownian motion.
A similar argument as in the Brownian case shows that $(A_N(n); n \geq 0)$ is a Markov chain. For $n \geq 1$, let
$$p_n: = \mathbb{P}(S_1 \geq 0, \cdots, S_n \geq 0) \quad \mbox{and} \quad \widetilde{p}_n: = \mathbb{P}(S_1 > 0, \cdots, S_n > 0).$$
Theorem \ref{GF} below recalls the classical theory of how the two sequences of probabilities $p_n$ and $\widetilde{p}_n$ are determined by the sequences of probabilities $\mathbb{P}(S_n \ge 0 )$ and $\mathbb{P}( S_n > 0 )$.
We give the transition matrix of the argmin chain $A_N$ in terms of $(p_n; n \geq 1)$ and $(\widetilde{p}_n; n \geq 1$), which can be made explicit for special choices of $F$. 

\begin{theorem}
\label{discreteth}
Whatever the common distribution $F$ of $(X_n; n \geq 0)$, the argmin chain $(A_N(n); n \geq 0)$ is a stationary and time-homogeneous Markov chain on $\{0,1, \ldots, N\}$. Let $\Pi_N(k)$, $k \in [0,N]$ be the stationary distribution, and $P_N(i,j)$, $i,j \in [0,N]$ be the transition probabilities of the argmin chain $(A_N(n); n \geq 0)$ on $[0,N]$. Then
\begin{equation}
\label{stationaryPi}
\Pi_N(k) = p_k \widetilde{p}_{N-k} \quad \mbox{for}~0 \leq k \leq N;
\end{equation}
\begin{equation}
\label{trans1}
P_N(i,N ) = 1 -\frac{\widetilde{p}_{N+1-i}}{\widetilde{p}_{N-i}} \quad \mbox{and} \quad P_N(i,i-1) = \frac{\widetilde{p}_{N+1-i}}{\widetilde{p}_{N-i}} \quad \mbox{for}~0<i \leq N;
\end{equation}
\begin{equation}
\label{trans2}
P_N(0,j ) = \frac{(p_j-p_{j+1}) \widetilde{p}_{N-j}}{\widetilde{p}_{N}} \quad \mbox{for}~0\leq  j < N \quad \mbox{and} \quad P_N(0,N) = 1 - \sum_{j=0}^{N-1} P_N(0,j).
\end{equation}
In particular,
\begin{enumerate}
\item 
If $(S_n; n \geq 0)$ is a random walk with continuous distribution and $\mathbb{P}(S_n>0) = \theta \in (0,1)$ for all $n \geq 1$. Let $(\theta)_{n \uparrow}: = \prod_{i=0}^{n-1} (\theta + i)$ be the Pochhammer symbol. Then
\begin{equation}
\label{disarcsine}
\Pi_{N}(k) = \frac{(\theta)_{k \uparrow} (\theta)_{N-k \uparrow}}{k! (N-k)!} \quad \mbox{for}~0 \leq k \leq N;
\end{equation}
\begin{equation}
\label{eq5354}
P_N(i, N) = \frac{1-\theta}{N+1-i} \quad \mbox{and} \quad P_N(i,i-1) = \frac{N+\theta-i}{N+1-i} \quad \mbox{for}~0<i \leq N;
\end{equation}
\begin{equation}
\label{eq55}
P_N(0, j) =  \frac{1 - \theta}{j+1} \binom{N}{j} \frac{(\theta)_{j \uparrow} (\theta)_{N-j \uparrow}}{(\theta)_{N \uparrow}} \quad \mbox{for}~0 \leq j < N;
\end{equation}
and
\begin{equation}
\label{eq56}
P_N(0,N) = \frac{2(1-\theta)}{N+1} - \frac{(1 - 2 \theta)(2 \theta)_{N \uparrow}}{(N+1)(\theta)_{N \uparrow}}.
\end{equation}
\item
If $(S_n; n \geq 0)$ is a simple symmetric random walk. Let $\lfloor x \rfloor$ be the integer part of $x$. Then
\begin{equation}
\label{eq513}
\Pi_N(k) =   \frac{\displaystyle \left(\frac{1}{2}\right)_{ \lfloor{\frac{k+1}{2}}\rfloor \uparrow} \left(\frac{1}{2}\right)_{\lfloor \frac{N-k}{2} \rfloor \uparrow} }{\displaystyle 2 \cdot \left \lfloor \frac{k+1}{2} \right \rfloor ! \left \lfloor \frac{N-k}{2}  \right \rfloor !}  \quad \mbox{for}~0 \leq k \leq N;
\end{equation}
For $0 < i \leq N$,
\begin{equation}
\label{eq514}
P_N(i,N) =     \left\{ \begin{array}{ccl}
         \frac{N-i}{N+1-i} & \mbox{if}~ N-i ~\mbox{is odd}; \\ [8 pt]
         1 & \mbox{if} ~N-i ~\mbox{is even};
                \end{array}\right.
\mbox{and} ~
P_N(i, i-1) = 1 - P_N(i,N);
\end{equation}
for $0 \leq j <N$,
\begin{equation}
\label{eq515}
P_N(0,j) = \left\{ \begin{array}{ccl}
          0 & \mbox{if}~ j ~\mbox{is odd}; \\ [8 pt]
         \frac{\displaystyle \binom{j}{\frac{j}{2}} \binom{2 \lfloor \frac{N}{2} \rfloor-j}{\lfloor \frac{N}{2} \rfloor-\frac{j}{2}}}{\displaystyle(j+2)\binom{2 \lfloor \frac{N}{2} \rfloor}{ \lfloor\frac{N}{2} \rfloor}} & \mbox{if} ~j ~\mbox{is even};
                \end{array}\right.
\end{equation}
and
\begin{equation}
\label{eq516}
P_N(0,N) = \left\{ \begin{array}{ccl}
         \frac{1}{N+1} & \mbox{if}~ N ~\mbox{is odd}; \\ [8 pt]
         \frac{2}{N+2} & \mbox{if} ~N ~\mbox{is even}.
                \end{array}\right.
\end{equation}
\end{enumerate}
\end{theorem}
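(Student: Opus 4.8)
The whole statement reduces to the classical fluctuation identities recalled in Theorem~\ref{GF}, the bridge being a pathwise decomposition of the sliding window at its rightmost argmin. Fix $n$, write $i=A_N(n)$, and split the window $(S_n,\dots,S_{n+N})$ into a \emph{pre-argmin} block $(S_n,\dots,S_{n+i})$ and a \emph{post-argmin} block $(S_{n+i},\dots,S_{n+N})$. Because $A_N(n)$ is the \emph{last} index realizing the minimum, $\{A_N(n)=i\}$ says exactly that the pre-block, re-rooted at $S_n$, is a length-$i$ path whose terminal value is a weak running minimum, and that the post-block, re-rooted at $S_{n+i}$, is a length-$(N-i)$ path staying strictly positive; these constrain disjoint blocks of increments $(X_{n+1},\dots,X_{n+i})$ and $(X_{n+i+1},\dots,X_{n+N})$, so given $\{A_N(n)=i\}$ the two blocks are independent with laws depending only on $i$. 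Shifting the window by one discards $S_n$ and appends the fresh increment $X_{n+N+1}$, which is independent of everything seen so far; combining this with the decomposition, the conditional law of $A_N(n+1)$ given the past depends only on $i$, and iterating, the pre-block merely sheds its first point at each step while staying a conditioned path, so that when the argmin first returns to $0$ the whole window is afresh distributed as a strictly positive length-$N$ path. This is the discrete analogue of the path surgery of \cite{PTargmin} and gives the Markov property and time-homogeneity; stationarity is automatic since $A_N(n)$ is a fixed function of the $N$ increments $X_{n+1},\dots,X_{n+N}$ of the stationary sequence $(X_n)$.

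For the stationary law, $\Pi_N(k)=\mathbb{P}(A_N(0)=k)$ equals, by the decomposition, the product of the probability that a length-$k$ path ends at a weak running minimum and the probability that a length-$(N-k)$ path stays strictly positive. Time-reversing the pre-block (the elementary cyclic argument underlying Theorem~\ref{GF}) identifies the first factor with $p_k$ and the second is $\widetilde p_{N-k}$ by definition, giving~\eqref{stationaryPi}; that $\sum_k p_k\widetilde p_{N-k}=1$ is the Sparre Andersen product identity $\big(\sum p_kz^k\big)\big(\sum\widetilde p_kz^k\big)=(1-z)^{-1}$ contained in Theorem~\ref{GF}. For the transitions: if $i\ge 1$ the argmin survives the shift at position $i-1$ and stays the last minimum iff the appended point does not reach the current minimum, i.e.\ iff the post-block (a strictly positive length-$(N-i)$ path) admits one more strictly positive step; conditionally this occurs with probability $\widetilde p_{N+1-i}/\widetilde p_{N-i}$, and otherwise the minimum is undercut and the argmin jumps to $N$, which is~\eqref{trans1}. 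If $i=0$ the minimum leaves the window and, by the Markov step, the new window is a strictly positive length-$N$ path with one fresh increment appended on the right; splitting this prolonged path at its last minimum as above, intersecting with the strict positivity of its first $N$ steps, and telescoping the pre-part contribution into $p_j-p_{j+1}$ gives $P_N(0,j)=(p_j-p_{j+1})\widetilde p_{N-j}/\widetilde p_N$ for $j<N$, with $P_N(0,N)$ the complementary mass.

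The two ``in particular'' cases are substitution into~\eqref{stationaryPi}--\eqref{trans2}. For continuous $F$ with $\mathbb{P}(S_n>0)=\theta$, Theorem~\ref{GF} (Spitzer's formula) gives $\sum_n\widetilde p_nz^n=(1-z)^{-\theta}$, hence $\widetilde p_n=(\theta)_{n\uparrow}/n!$, together with the companion series for $p_n$; Pochhammer arithmetic via $(\theta)_{n+1\uparrow}=(\theta+n)(\theta)_{n\uparrow}$ then produces~\eqref{disarcsine}--\eqref{eq56}, the last being the summation of~\eqref{eq55} using the Vandermonde-type identity $\sum_j\binom{N}{j}(\theta)_{j\uparrow}(\theta)_{N-j\uparrow}=(2\theta)_{N\uparrow}$ plus one further identity to absorb the factor $1/(j+1)$. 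For the simple symmetric walk one substitutes instead the classical ballot/Catalan values of $p_n$ and $\widetilde p_n$ (the floors arising because staying $\ge 0$, or $>0$, over $2m$ and over $2m+1$ steps define the same event) and reads off the parity vanishing --- e.g.\ $P_N(0,j)=0$ for odd $j$, since a strictly positive path extended by a fresh increment can tie $0$ only after an even number of steps --- which yields~\eqref{eq513}--\eqref{eq516}.

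The main obstacle is the Markov step: showing that conditioning on $\{A_N(n)=i\}$ makes the post-argmin block a conditioned path \emph{independent of the full past}, and that this structure is reproduced exactly after the window shifts, in particular on the event that the argmin jumps to $N$, where the new window must be identified as a length-$N$ path ending at its weak running minimum. Once this is established, everything else is bookkeeping with the generating functions of Theorem~\ref{GF}.
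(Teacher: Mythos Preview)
Your plan is correct and follows essentially the same route as the paper: Denisov's decomposition at the rightmost argmin yields the Markov property (the paper defers this step to \cite{PTargmin}, you sketch it directly), the stationary law is Feller's discrete arcsine $\Pi_N(k)=p_k\widetilde p_{N-k}$, and the transitions come from whether the fresh increment undercuts the current minimum, exactly as in the paper's Case~1/Case~2 split.

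The one place where you are thinner than the paper is the closed form for $P_N(0,N)$ in the two special cases. Rather than summing \eqref{eq55} with the awkward $1/(j+1)$ factor and invoking an unspecified ``further identity'', the paper works directly with $\sum_{j=0}^{N-1}(p_j-p_{j+1})\widetilde p_{N-j}$ and evaluates the two sums separately via the convolution identities $p(s)^2=(1-s)^{-2\theta}$ (continuous case) and $p(s)\widetilde p(s)=(1-s)^{-1}$ (simple walk), read off at degrees $N$ and $N+1$; this telescopes cleanly to \eqref{eq56} and \eqref{eq516} without any Vandermonde-type summation. Your sketch would go through, but this generating-function shortcut is the actual mechanism.
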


\quad For the argmin chain $A_N$, the transition probability from $0$ to $N$ is given by \eqref{trans2} in the general case. But this probability is simplified to \eqref{eq56} and \eqref{eq516} in the two special cases. These identities are proved analytically by Lemmas \ref{lem11} and \ref{lem12}. We do not have a simple explanation, and leave combinatorial interpretations for interested readers.

\quad Let $(X_t; t \geq 0)$ be a real-valued L\'evy process. We consider the argmin process $(\alpha_t^X; t \geq 0)$ of $X$, defined by
\begin{equation}
\alpha_t^X : = \sup\left\{s \in [0,1]: X_{t+s}=\inf_{u \in [0,1]}X_{t+u} \right\} \quad \mbox{for all}~t \geq 0.
\end{equation}
We are particularly interested in the case where $X$ is a stable L\'evy process. We follow the notations in Bertoin \cite[Chapter VIII]{Bertoin}. Up to a multiple factor, a stable L\'evy process $X$ is entirely determined by a {\em scaling parameter} $\alpha \in (0.2]$, and a {\em skewness parameter} $\beta \in [-1,1]$. The characteristics exponent of a stable L\'evy process $X$ with parameters $(\alpha,\beta)$ is given by
\begin{equation*}
    \Psi(\lambda) : = \left\{ \begin{array}{ccl}        
  |\lambda|^{\alpha} (1 - i \beta \sgn(\lambda) \tan(\pi \alpha/2))  & \mbox{for}~\alpha \neq 1, \\ [8pt]
  |\lambda| (1 + i \frac{2 \beta}{\pi} \sgn(\lambda) \log|\lambda|)  & \mbox{for}~\alpha = 1.
\end{array}\right.
\end{equation*}
where $\sgn$ is the sign function. Let $\rho : = \mathbb{P}(X_1>0)$ be the {\em positivity parameter}. Zolotarev \cite[Section 2.6]{Zolo} found that 
\begin{equation}
\label{positivity}
\rho = \frac{1}{2} + (\pi \alpha)^{-1}\arctan(\beta \tan(\pi \alpha/2)) \quad \mbox{for}~\alpha \in (0,2].
\end{equation}
\quad If $X$ (resp. $-X$ ) is a subordinator, then almost surely $\alpha^X_t =0$ (resp. $\alpha^X_t = 1$) for all $t \geq 0$. Relying on the excursion theory, we generalize Pitman and Tang \cite[Theorem 1.2]{PTargmin}.

\begin{theorem}
\label{mainLevy}
~
\begin{enumerate}
\item
Let $(X_t; t \geq 0)$ be a L\'evy process. Then the argmin process $(\alpha^X_t; t \geq 0)$ of $X$ is a stationary and time-homogeneous Markov process.
\item
Let $(X_t; t \geq 0)$ be a stable L\'evy process with parameters $(\alpha,\beta)$, and assume that neither $X$ nor $-X$ is a subordinator. 
Let $\rho$ be defined by \eqref{positivity}. Then the argmin process $(\alpha^X_t; t \geq 0)$ of $X$ is a stationary Markov process, with generalized arcsine distributed invariant measure whose density is
\begin{equation}
f(x) : = \frac{\sin \pi \rho}{\pi} x^{-\rho} (1-x)^{\rho-1} \quad \mbox{for}~0<x<1.
\end{equation}
and Feller transition semigroup $Q^X_t(x, \cdot)$, $t >0$ and $x \in [0,1]$ where
\begin{equation}
\label{Qtrans}
Q^X_t(x,dy) =  \left\{ \begin{array}{ccl}        
 1_{\{0<y<1\}} \frac{\sin \pi \rho}{\pi} y^{-\rho} (1-y)^{\rho-1} dy  & \mbox{for}~0 \leq x \leq 1 <t, \\ [8pt]
 \left(\frac{1-x}{1-x+t}\right)^{1-\rho} \delta_{x-t}(dy) +  \frac{\sin \pi \rho}{\pi} \cdot \frac{ (1-y)^{\rho-1} (y+t-1)_{+}^{\rho} }{(y+t-x)}dy & \mbox{for}~0< t \leq x \leq 1\\ [8pt]
 \frac{\sin \pi \rho}{\pi (y+t-x)} y^{-\rho} (1-y)^{\rho-1} [(t-x)^{\rho}(1-x)^{1-\rho} + y^{\rho}(y+t-1)_{+}^{1-\rho}] dy  & \mbox{for}~0 \leq x<t \leq 1.
\end{array}\right.
\end{equation} 
\end{enumerate}
\end{theorem}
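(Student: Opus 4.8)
The plan is to prove the two parts separately. Part~(1) will come from the stationarity and independence of the increments of $X$ together with the classical splitting of a L\'evy path at the time of its infimum over a fixed interval. Part~(2) will follow the proof of \cite[Theorem~1.2]{PTargmin}, with Brownian scaling and the Brownian path decompositions used there replaced by the scaling property of the stable process and the decomposition of its path at the infimum.

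For part~(1), first note that $(\alpha^X_{t+s};\,s\ge0)$ is a fixed measurable functional of the shifted increment process $(X_{t+u}-X_t;\,u\ge0)$, whose law does not depend on $t$, so $\alpha^X$ is stationary and time-homogeneous. For the Markov property, fix $t\ge0$, put $\gamma_t:=t+\alpha^X_t$ (the last time in $[t,t+1]$ at which $\inf_{[t,t+1]}X$ is attained), and split the trajectory of $X$ at $\gamma_t$ into the pre-infimum part $(X_r;\,r\le\gamma_t)$, the post-infimum fragment $E_t:=(X_{\gamma_t+r}-X_{\gamma_t};\,0\le r\le t+1-\gamma_t)$, and the future $F_t:=(X_{t+1+r}-X_{t+1};\,r\ge0)$. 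Any later window $[t+s,t+s+1]$ with $s\ge0$ lies in $[t,\infty)$, and within $[t,t+1]$ it can attain its infimum only in $[\gamma_t,t+1]$ (the values on $[t,\gamma_t)$ being $\ge X_{\gamma_t}=\inf_{[t,t+1]}X$, and $\gamma_t$ being the last minimizer), so $(\alpha^X_{t+s};\,s\ge0)$ is a fixed deterministic function of $(\alpha^X_t,E_t,F_t)$. Now $F_t$ is independent of $\mathcal F^X_{t+1}\supseteq\sigma(\alpha^X_t,E_t)$; and by the classical splitting of the L\'evy path at the time of its infimum (due to Millar; see also \cite{Bertoin} and the theory of the L\'evy process conditioned to stay positive), conditionally on $\{\alpha^X_t=x\}$ the fragment $E_t$ is a L\'evy meander of length $1-x$, independent of the pre-infimum part. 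Since one verifies that every $\alpha^X_u$ with $u\le t$ is a measurable function of the pre-infimum part and of $\alpha^X_t$, the fragment $E_t$ is, conditionally on $\alpha^X_t$, independent of $\mathcal F^\alpha_t:=\sigma(\alpha^X_u;\,u\le t)$; combining these facts, $\mathbb E[\,g((\alpha^X_{t+s})_{s\ge0})\mid\mathcal F^\alpha_t\,]$ depends on $\mathcal F^\alpha_t$ only through $\alpha^X_t$, which is the Markov property.

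For part~(2), assume $X$ is stable with parameters $(\alpha,\beta)$ and that neither $X$ nor $-X$ is a subordinator, so $\rho\in(0,1)$. The infimum of $X$ over $[0,1]$ is a.s.\ attained at a single time $M\in(0,1)$, and by the arcsine law for the location of the infimum of a stable process (which also follows from the splitting at the infimum together with self-similarity) $M$ has the $\mathrm{Beta}(1-\rho,\rho)$ law with density $f$; this identifies the invariant measure. If $t\ge1$ the windows $[0,1]$ and $[t,t+1]$ are disjoint, so $\alpha^X_t$ is independent of $\alpha^X_0$ and distributed as $f$, giving the first line of \eqref{Qtrans}. If $0<t\le x\le1$, then on $\{\alpha^X_0=x\}$ the point $x$ lies in $[t,t+1]$, and one checks directly that $\alpha^X_t=x-t$ (the infimum over $[t,t+1]$ is still at $x$) exactly when $\inf_{[0,t+1]}X=\inf_{[0,1]}X$, i.e.\ exactly when the location of $\inf_{[0,t+1]}X$ is $\le1$; writing that location as $(t+1)M'$ with $M'$ again $\mathrm{Beta}(1-\rho,\rho)$ by self-similarity, a short computation with Beta densities gives $\mathbb P(\alpha^X_t=x-t\mid\alpha^X_0=x)=f(x/(t+1))/((t+1)f(x))=((1-x)/(1-x+t))^{1-\rho}$, the atom in the second line of \eqref{Qtrans}.

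It remains to find the absolutely continuous parts of $Q^X_t(x,\cdot)$ in the last two regimes, which is the heart of the matter. The plan is to use the decomposition of the stable path at its infimum over $[0,1]$: given $M=x$, the post-infimum fragment on $[x,1]$ is a L\'evy meander time-rescaled by $(1-x)^{1/\alpha}$, and the time-reversed pre-infimum fragment on $[0,x]$ is an independent L\'evy meander of the dual process time-rescaled by $x^{1/\alpha}$, both with laws that are explicit for stable processes; one then appends the independent future increments on $[1,t+1]$ (a stable fragment of length $t$), uses the Markov property of the meander to restrict the post-infimum fragment to the sub-window $[t,1]$, and determines whether the infimum over $[t,t+1]$ lands in the overlap $[t,1]$ or in the fresh part $(1,t+1]$. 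This expresses $Q^X_t(x,dy)$ through integrals of the joint law of the depth of the infimum and of the position and value of the infimum of the fresh stable fragment over $[0,t]$, all of which are known Beta, meander, and first-passage densities for stable processes; carrying out these integrals produces the densities in the second and third lines of \eqref{Qtrans}. The Feller property then follows from the explicit formulas, the atom weight and the densities being continuous in $x$, with $Q^X_t(x,\cdot)\to\delta_x$ as $t\downarrow0$ after a short boundary estimate at $x=0$ and $x=1$. I expect the main obstacle to be exactly this evaluation: setting up the several integrals correctly against the stable meander and first-passage laws, recognizing the somewhat unexpected closed forms they combine to, and dealing carefully with the identification of the pre- and post-infimum fragments as meanders (uniqueness of the infimum, absence of a jump at the infimum time) and with the anomalous scaling of the $\alpha=1$ stable processes, where the self-similarity reduction needs the usual correction.
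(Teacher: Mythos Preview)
Your strategy coincides with the paper's: for Part~(1) you invoke the splitting of the L\'evy path at the time of its infimum and then argue, as in \cite[Proposition~3.4]{PTargmin}, that the post-infimum fragment together with the independent future determines $(\alpha^X_{t+s})_{s\ge0}$ while the past $(\alpha^X_u)_{u\le t}$ is measurable with respect to the pre-infimum part; for Part~(2) you plan to redo the calculations of \cite[Section~3.4]{PTargmin} with the stable inputs in place of the Brownian ones. Your computation of the atom weight via the identity $\{\alpha^X_t=x-t\}=\{\text{argmin on }[0,t+1]\le 1\}$ and Beta scaling is correct and is a clean way to recover $((1-x)/(1-x+t))^{1-\rho}$.

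There is, however, one genuine gap in Part~(1). Millar's splitting theorem, which you cite, is stated only for L\'evy processes that are \emph{not} compound Poisson with drift (equivalently, those whose one-dimensional marginals are continuous); in the compound Poisson case the infimum is attained on an interval and the Millar argument does not apply directly. The paper closes this gap by giving a separate path decomposition for compound Poisson processes with drift, built by conditioning on the jump times and applying Denisov's discrete decomposition to the embedded walk. Your proposal does not mention this case, so as written it does not establish the Markov property for all L\'evy processes. You should either restrict Part~(1) to processes with continuous marginals, or supply the compound Poisson decomposition.

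A smaller point of comparison for Part~(2): where you plan to compute the absolutely continuous part of $Q^X_t(x,\cdot)$ directly from meander and first-passage densities, the paper instead passes through It\^o's excursion theory for $X-\underline{X}$, using the Monrad--Silverstein formula ${\bf n}(\zeta>t)=c\,t^{\rho-1}/\Gamma(\rho)$ to identify the jump rate $\mu^{\uparrow 1}(x)=(1-\rho)/(1-x)$ and then following the excursion-theoretic computations of \cite[Section~3.4]{PTargmin}. The two routes are equivalent in the end, but the excursion approach sidesteps having to set up the several meander integrals you anticipate as the main obstacle. Finally, your worry about $\alpha=1$ is largely moot: for $\alpha=1$, $\beta\ne0$ the positivity parameter $\rho=\mathbb P(X_t>0)$ is not constant in $t$, so the formula~\eqref{positivity} and the statement of the theorem already implicitly exclude that case; for the symmetric Cauchy process ($\alpha=1$, $\beta=0$) strict self-similarity holds and no correction is needed.
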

\vskip 12 pt
{\bf Organization of the paper:} The layout of the paper is as follows. 
\begin{itemize}
\item
In Section \ref{s5}, we study the argmin chain $(A_N(n); n \geq 0)$ for random walks. There Theorem \ref{discreteth} is proved.
\item
In Section \ref{s3}, we consider the argmin process $(\alpha^X_t; t \geq 0)$ of L\'evy process, and prove Theorem \ref{mainLevy}.
\end{itemize}
\section{The argmin chain of random walks}
\label{s5}
\quad In this section, we prove Theorem \ref{discreteth}. Recall the definition of the argmin chain $(A_N(n); n \geq 0)$ from \eqref{argminchain}. Fix $N  \geq 1$. Let $(\overset{\rightarrow}{X}_N(n); n \geq 0)$ be the moving window process of length $N$, defined by
$$\overset{\rightarrow}{X}_N(n):=(X_{n+1}, \ldots, X_{n+N}) \quad \mbox{for}~n \geq 0,$$
with associated partial sums $\overset{\rightarrow}{S^X_N}(n):=(0, X_{n+1}, X_{n+1}+X_{n+2}, \ldots, \sum_{i=1}^N X_{n+i})$. Similarly, let $(\overset{\leftarrow}{X}_N(n); n \geq 0)$ be the reversed moving window process of length $N$, defined by
$$\overset{\leftarrow}{X}_N(n):=(-X_{n}, \ldots, -X_{n-N+1}) \quad \mbox{for}~n \geq N,$$
with associated partial sums $\overset{\leftarrow}{S^X_N}(n):=(0, -X_{n}, -X_{n}-X_{n-1}, \ldots, -\sum_{i=1}^N X_{n+1-i})$. 
Note that $n+A_N(n)$ is the last time at which the minimum of $(S_k; k \geq 0)$ on $[n+1,n+N]$ is attained. So $A_N(n)$ is a function of $\overset{\rightarrow}{S^X_N}(n)$ or $\overset{\leftarrow}{S^X_N}(n+N)$. 
The following path decomposition is due to Denisov.

\begin{theorem} [Denisov's decomposition for random walks] \cite{Denisov}
\label{DenisovRW}
Let $S_n:= \sum_{i=1}^n X_i$, where $X_i$ are independent random variables. For $N \geq 1$, let 
$$A_N:=\sup\left\{0 \leq i \leq N: S_i=\min_{1 \leq k \leq N} S_k\right\}$$
be the last time at which $(S_k; k \geq 0)$ attains its minimum on $[0,N]$. For each positive integer $a$ with $0 \le a \le N$, given the event $\{A_N = a\}$, 
the random walk is decomposed into two conditionally independent pieces:
\begin{enumerate}[(a).]
\item
$(S_{a-k}-S_{a}; 0 \leq k \leq a)$ has the same distribution as $\overset{\leftarrow \quad}{S^X_{a}}(a)$ conditioned to stay non-negative;
\item
$(S_{a+k}-S_{a}; 0 \leq k \leq N - a)$ has the same distribution as $\overset{\rightarrow \quad \quad}{S^X_{N - a}} (a)$ conditioned to stay positive.
\end{enumerate}
\end{theorem}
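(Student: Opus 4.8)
\emph{Proof strategy.} The plan is a direct path-surgery argument: one cuts the walk at the last index $a$ at which it attains its minimum over $\{0,1,\dots,N\}$, and notes that the two resulting pieces are built from \emph{disjoint}, hence \emph{independent}, blocks of increments, while the event $\{A_N=a\}$ is precisely a constraint imposed separately on each block. So fix $a\in\{0,1,\dots,N\}$ with $\mathbb{P}(A_N=a)>0$ (the conditioning being vacuous otherwise), and set $U:=(X_1,\dots,X_a)$ and $V:=(X_{a+1},\dots,X_N)$, which are independent by hypothesis.

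First I would identify the two pieces. Reading off increments, the pre-minimum reversed path $(S_{a-k}-S_a;\,0\le k\le a)$ has successive increments $-X_a,-X_{a-1},\dots,-X_1$, so it is a deterministic function of $U$ and in fact \emph{equals} the reversed window $\overset{\leftarrow}{S^X_a}(a)$; symmetrically, the post-minimum path $(S_{a+k}-S_a;\,0\le k\le N-a)$ has increments $X_{a+1},\dots,X_N$, is a function of $V$, and equals the forward window $\overset{\rightarrow}{S^X_{N-a}}(a)$. Thus all the content of the theorem sits in the ``conditioned to stay (non)negative'' clauses.

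Next I would establish the combinatorial identity at the heart of the argument. Because $S_0=0$ and $a$ is the \emph{last} index realizing the minimum over $\{0,\dots,N\}$, unwinding the definition of $A_N$ gives
\[
\{A_N=a\}=\{S_i\ge S_a\ \text{for}\ 0\le i\le a\}\ \cap\ \{S_i>S_a\ \text{for}\ a<i\le N\},
\]
the strict inequality in the second factor being forced by ``last'' and the weak one in the first allowing the minimum to be revisited before time $a$. Reversing time, the first factor reads $S_{a-k}-S_a\ge 0$ for $0\le k\le a$, i.e.\ the window $\overset{\leftarrow}{S^X_a}(a)$ stays non-negative --- an event $\{U\in B_-\}$ depending only on $U$; the second reads $S_{a+k}-S_a>0$ for $1\le k\le N-a$, i.e.\ the window $\overset{\rightarrow}{S^X_{N-a}}(a)$ stays positive --- an event $\{V\in B_+\}$ depending only on $V$. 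No ``cross term'' survives, since the inequalities only ever compare values on one side of $a$ through the common pivot $S_a$, so $\{A_N=a\}=\{U\in B_-\}\cap\{V\in B_+\}$.

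Then the conclusion follows from independence: for bounded measurable $g,h$,
\[
\mathbb{E}\!\left[g\big(\overset{\leftarrow}{S^X_a}(a)\big)\,h\big(\overset{\rightarrow}{S^X_{N-a}}(a)\big)\,1_{\{A_N=a\}}\right]=\mathbb{E}\!\left[g\big(\overset{\leftarrow}{S^X_a}(a)\big)1_{\{U\in B_-\}}\right]\mathbb{E}\!\left[h\big(\overset{\rightarrow}{S^X_{N-a}}(a)\big)1_{\{V\in B_+\}}\right],
\]
because the first factor on the left is $\sigma(U)$-measurable, the second $\sigma(V)$-measurable, and $U\perp V$; dividing by $\mathbb{P}(A_N=a)=\mathbb{P}(U\in B_-)\,\mathbb{P}(V\in B_+)$ yields exactly the asserted conditional factorization. (The normalizing factors $\mathbb{P}(U\in B_-)$ and $\mathbb{P}(V\in B_+)$ are moreover of the type appearing in the stationary law \eqref{stationaryPi}, which is how Theorem~\ref{discreteth} will invoke this decomposition.) The only step needing genuine care is the bookkeeping in the combinatorial identity --- fixing the index ranges and the strict-versus-weak inequalities, which is dictated by taking the \emph{last} rather than the first time of the minimum; note that identical distribution of the $X_i$ is nowhere used.
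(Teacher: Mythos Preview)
Your argument is correct and is precisely the standard one: the event $\{A_N=a\}$ factors as the intersection of a $\sigma(X_1,\dots,X_a)$-event (the reversed pre-$a$ path stays non-negative) and a $\sigma(X_{a+1},\dots,X_N)$-event (the post-$a$ path stays positive), so independence of the two blocks of increments yields the conditional independence and the identified conditional laws. The paper itself does not supply a proof of this theorem --- it is quoted from Denisov \cite{Denisov} as a known tool and then applied, so there is no ``paper's own proof'' to compare against; your write-up is exactly how one would fill in the omitted argument. One small remark on the bookkeeping: the displayed definition of $A_N$ in the statement has $\min_{1\le k\le N}$, but the surrounding prose (``minimum on $[0,N]$'') and the requirement that the reversed window stay non-negative up to its last coordinate $S_0-S_a=-S_a$ make clear that the intended minimum is over $0\le k\le N$; you correctly work with that reading.
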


\quad By Denisov's decomposition for random walks, it is easy to adapt the argument of Pitman and Tang \cite[Proposition 3.4]{PTargmin} to show that $(A_N(n); n \geq 0)$ is a time-homogeneous Markov chain on $\{0,1,\cdots, N\}$. Here the detail is omitted. 

\quad Now we compute the invariant distribution $\Pi_N$, and the transition matrix $P_N$ of  the argmin chain $(A_N(n); n \geq 0)$ on $\{0,1, \ldots, N\}$. To proceed further, we need the following result regarding the law of ladder epochs, originally due to Sparre Anderson \cite{SA}, Spitzer \cite{Spitzer} and Baxter \cite{Baxter}. It can be read from Feller \cite[Chapter XII$.7$]{Fellervol2}.
\begin{theorem} \cite{SA, Fellervol2}
\label{GF}
\begin{enumerate}
\item
Let $\tau_n:=\mathbb{P}(S_1 \geq 0, \ldots, S_{n-1} \geq 0, S_n<0)$ and $\tau(s): = \sum_{n=0}^{\infty} \tau_n s^n$. Then for $|s|<1$,
$$\log \frac{1}{1-\tau(s)} = \sum_{n=1}^{\infty} \frac{s^n}{n} \mathbb{P}(S_n<0).$$
\item 
Let $p_n:= \mathbb{P}(S_1 \geq 0, \ldots, S_n \geq 0)$ and $p(s): = \sum_{n=0}^{\infty} p_n s^n$. Then for $|s|<1$,
$$p(s) = \exp\left(\sum_{n=1}^{\infty} \frac{s^n}{n} \mathbb{P}(S_n \geq 0)\right).$$
\item
Let $\widetilde{p}_n:= \mathbb{P}(S_1 > 0, \ldots, S_n > 0)$ and $\widetilde{p}(s): = \sum_{n=0}^{\infty} \widetilde{p}_n s^n$. Then for $|s|<1$,
$$\widetilde{p}(s) = \exp\left(\sum_{n=1}^{\infty} \frac{s^n}{n} \mathbb{P}(S_n > 0)\right).$$
\end{enumerate}
\end{theorem}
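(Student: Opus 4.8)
The plan is to reduce all three identities to Spitzer's identity for the sequence $(p_n)$ and then to prove that by a cyclic‑permutation (cycle lemma) argument. First, parts (1)--(3) are equivalent modulo elementary manipulations of formal power series: the telescoping relation $p_n-p_{n-1}=-\mathbb{P}(S_1\ge0,\ldots,S_{n-1}\ge0,\,S_n<0)=-\tau_n$ (with $p_{-1}:=0$ and $\tau_0=0$) gives $(1-s)p(s)=1-\tau(s)$, so (1) follows from (2) via $\log\frac{1}{1-\tau(s)}=\log\frac{1}{1-s}-\log p(s)$; and (3) is the verbatim analogue of (2) with each weak inequality replaced by a strict one, so it follows from the same argument applied to strict rather than weak ladder epochs. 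Thus it suffices to prove $p(s)=\exp\!\big(\sum_{n\ge1}\frac{s^n}{n}\mathbb{P}(S_n\ge0)\big)$.

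Next I would rewrite $p(s)$ through the first weak ascending ladder epoch $\sigma:=\inf\{n\ge1:S_n\ge0\}$. Reversing the increments $X_i\mapsto X_{n+1-i}$, which preserves the joint law, shows $p_n=\mathbb{P}(S_n\ge S_k\text{ for }0\le k\le n)$, i.e.\ the probability that $n$ is itself a weak ascending ladder epoch; since the weak ascending ladder epochs form a (possibly terminating) renewal sequence with step distributed as $\sigma$, this yields $p_n=\sum_{m\ge1}\mathbb{P}(\sigma_1+\cdots+\sigma_m=n)$ for $n\ge1$ (with $\sigma_i$ i.i.d.\ copies of $\sigma$), hence $p(s)=\sum_{m\ge0}\chi(s)^m=(1-\chi(s))^{-1}$ where $\chi(s):=\mathbb{E}[s^{\sigma}\,;\,\sigma<\infty]$. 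It then remains to show $\log\frac{1}{1-\chi(s)}=\sum_{n\ge1}\frac{s^n}{n}\mathbb{P}(S_n\ge0)$; expanding the left side as $\sum_{m\ge1}\frac1m\chi(s)^m$, its coefficient of $s^n$ equals $\mathbb{E}\big[\frac1{R_n}\mathbf{1}_{\{n\text{ is a weak ascending ladder epoch}\}}\big]$, where $R_n$ is the number of weak ascending ladder epochs in $\{1,\ldots,n\}$ (on the indicated event $R_n$ is exactly the number $m$ of renewal steps used).

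The decisive step is the cycle lemma. The event $\{S_n\ge0\}$ is invariant under cyclic rotation of $(X_1,\ldots,X_n)$, and for reals in general position with $\sum x_i\ge0$, among the $n$ cyclic rotations those that make the last coordinate a weak ascending ladder epoch, each weighted by the reciprocal of its own count $R_n$, have total weight exactly $1$ (total weight $0$ when $\sum x_i<0$). Averaging over the $n$ rotations is free by exchangeability of the $X_i$, and it converts $\mathbb{E}\big[\frac1{R_n}\mathbf{1}_{\{\,\cdots\,\}}\big]$ into $\frac1n\mathbb{P}(S_n\ge0)$, which is the required coefficient. Running the identical argument with every weak inequality replaced by a strict one proves (3), and (1) follows as noted above.

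I expect this cycle lemma to be the main obstacle: proving the deterministic weight‑counting identity cleanly, and correctly threading the weak‑versus‑strict conventions when several cyclic partial sums coincide or vanish (this pairing is exactly why ``non‑strict'' in (1) goes with ``$\ge0$'' in (2), and ``strict'' with ``$>0$'' in (3)). A convenient route is to establish everything first for continuous $F$, where no ties occur and the relevant rotations can be exhibited by an explicit bijection, and then to pass to general $F$ by perturbing each $X_i$ to $X_i+\varepsilon U_i$ with $U_i$ independent uniform and letting $\varepsilon\downarrow0$, using that $p_n$, $\widetilde p_n$, $\mathbb{P}(S_n\ge0)$ and $\mathbb{P}(S_n>0)$ all converge. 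Alternatively one can bypass the combinatorics entirely and invoke the Wiener--Hopf factorization $1-s\,\mathbb{E}[e^{i\lambda X_1}]=\big(1-\mathbb{E}[s^{\sigma}e^{i\lambda H}]\big)\big(1-\mathbb{E}[s^{\sigma'}e^{i\lambda H'}]\big)$ for the first ascending and descending ladder points, set $\lambda=0$, and exponentiate the resulting Frullani‑type relation; the cyclic argument is, however, more self‑contained.
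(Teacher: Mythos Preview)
The paper does not prove Theorem~\ref{GF}. It is stated as a classical result (``originally due to Sparre Andersen, Spitzer and Baxter \ldots\ It can be read from Feller [Chapter~XII.7]'') and is used as a black box to derive the explicit formulas in Sections~\ref{s51}--\ref{s52}. There is therefore no proof in the paper to compare your proposal against.

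That said, your outline is a correct and standard route to these identities. The reduction of (1) to (2) via $(1-s)p(s)=1-\tau(s)$ is right, and (3) is indeed the verbatim strict analogue of (2). The time-reversal giving $p_n=\mathbb{P}(S_n\ge S_k,\ 0\le k\le n)$ and the renewal identity $p(s)=(1-\chi(s))^{-1}$ are both standard. The combinatorial cycle lemma you state---that for a generic sequence with $S_n\ge 0$ the cyclic rotations making $n$ a weak ascending ladder epoch, weighted by the reciprocal of their ladder count, sum to~$1$---is true and is one of the known combinatorial proofs of Spitzer's identity (in the spirit of the Bohnenblust--Spitzer lemma). Your plan to first treat continuous $F$ and then pass to the limit via $X_i+\varepsilon U_i$ is a clean way to dispose of ties; just be careful that the limits of $p_n$ and $\widetilde p_n$ go to the correct weak/strict versions (they do, by monotone events and $\mathbb{P}(S_k=0)$ contributions being handled on the correct side). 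The Wiener--Hopf alternative you mention is essentially Feller's own argument, so either path would fill in what the paper merely cites.
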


\quad In the sequel, let $T_{-}:=\inf\{n \geq 1; S_n<0\}$ and $\widetilde{T}_{-}:=\inf\{n \geq 1; S_n \leq 0\}$ so that $p_n = \mathbb{P}(T_{-} > n)$ and $\widetilde{p}_n = \mathbb{P}(\widetilde{T}_{-} > n)$. 

\begin{proof}[Proof of Theorem \ref{discreteth}]
Observe that the distribution of the argmin of sums on $\{0,1,\cdots,N\}$ is the stationary distribution of the argmin chain. Following Feller \cite[Chapter XII.8]{Feller}, this is the discrete arcsine law
\begin{equation*}
\Pi_N(k) = p_k \widetilde{p}_{N-k} \quad \mbox{for}~0 \leq k \leq N.
\end{equation*}
Let $t_i:=\mathbb{P}(T_{-}=i) = p_{i-1} - p_i$ and $\widetilde{t}_i:=\mathbb{P}(\widetilde{T}_{-}=i) = \widetilde{p}_{i-1} - \widetilde{p}_i$ for $i >0$. Now we calculate the transition probabilities of the argmin chain. We distinguish two cases.
\vskip 6 pt
{\bf Case $1$.} The argmin chain starts at $0 < i \leq N$: $A_N(0) = i$. This implies that for all $k \in [1, i-1]$, $S_k \geq S_i$, and for all $k \in [i+1, N]$, $S_k > S_i$.
\begin{itemize}
\item 
If $S_{N+1}>S_i$, then the last time at which $(S_n)_{1 \leq n \leq N+1}$ attains its minimum is $i$, meaning that $A_{N}(1) = i-1$.
\item
If $S_{N+1} = S_i$, the the last time at which $(S_n)_{1 \leq n \leq N+1}$ attains its minimum is $N+1$, meaning that $A_N(1) = N$.
\end{itemize}
If we look forward from time $i$, $N+1$ is the first time at which the chain enters $(-\infty, 0]$. Consequently, for $0 < i \leq N$,
\begin{equation}
P_N(i,N) = \frac{\widetilde{t}_{N+1-i}}{\widetilde{p}_{N-i}} \quad \mbox{and} \quad P_N(i,i-1) = 1 - P_N(i,N),
\end{equation}
which leads to \eqref{trans1}.
\vskip 6 pt
{\bf Case $2$.} The argmin chain starts at $i=0$: $A_N(0) = 0$. For $0 \leq j <N$, let $j+1$ be the last time at which the minimum on $[1,N]$ is attained. 
\begin{itemize}
\item 
If $S_{N+1}>S_{j+1}$, then the last time at which $(S_n)_{1 \leq n \leq N+1}$ attains its minimum is $j+1$, meaning that $A_N(1)=j$.
\item
If $S_{N+1} = S_{j+1}$, then the last time at which $(S_n)_{1 \leq n \leq N+1}$ attains its minimum is $N+1$, meaning that $A_N(1)=N$.
\end{itemize}
If we look backward from time $j+1$, the origin is the first time at which the reversed walk enters $(-\infty, 0)$. So for $0 \leq j <N$,
\begin{equation}
P_{N}(0,j) = \frac{t_{j+1} \widetilde{p}_{N-j}}{\widetilde{p}_N},
\end{equation}
which yields \eqref{trans2}. The above formula fails for $j=N$, but $P_N(0, N) = 1 -\sum_{j=0}^{N-1}P_N(0,j)$. 
\end{proof}

\quad We complete the proof of Theorem \ref{discreteth} in the following two subsections. In Section \ref{s51}, we consider the non-lattice case, and in Section \ref{s52}, we deal with the lattice case.
\subsection{$F$ is continuous and $\mathbb{P}(S_n>0) = \theta \in (0,1)$}
\label{s51}
From Theorem \ref{GF}, we deduce the well known facts that
\begin{equation*}
\log p(s) = \theta \sum_{n=1}^{\infty} \frac{s^n}{n} \Longrightarrow p(s) = (1-s)^{-\theta} = 1+ \sum_{n=1}^{\infty} \frac{(\theta)_{n \uparrow}}{n!} s^n,
\end{equation*}
where $(\theta)_{n \uparrow}: = \prod_{i=0}^{n-1} (\theta+i)$ is the {\em Pochhammer symbol}. This implies that
\begin{equation}
\label{pcts}
p_n = \widetilde{p}_n = \frac{(\theta)_{n \uparrow}}{n!} \quad \mbox{for all}~n>0.
\end{equation}
\quad By injecting \eqref{pcts} into \eqref{stationaryPi}, \eqref{trans1} and \eqref{trans2}, we get \eqref{disarcsine}, \eqref{eq5354} and \eqref{eq55}. The formula \eqref{eq56} is obtained by the following lemma.

\begin{lemma}
\label{lem11}
\begin{equation*}
P_{N}(0,N) = \frac{2(1-\theta)}{N+1} - \frac{(1 - 2 \theta)(2 \theta)_{N \uparrow}}{(N+1)(\theta)_{N \uparrow}}.
\end{equation*}
\end{lemma}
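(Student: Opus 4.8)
The plan is to combine the identity $P_N(0,N)=1-\sum_{j=0}^{N-1}P_N(0,j)$ established in the proof of Theorem~\ref{discreteth} with the explicit formula \eqref{eq55}, and reduce the statement to the evaluation of one finite sum of products of Pochhammer symbols. Writing $\frac{1}{j+1}\binom{N}{j}=\frac{N!}{(j+1)!\,(N-j)!}$ in \eqref{eq55}, one gets
$$\sum_{j=0}^{N-1}P_N(0,j)=\frac{(1-\theta)\,N!}{(\theta)_{N\uparrow}}\,\Sigma_N,\qquad \Sigma_N:=\sum_{j=0}^{N-1}\frac{(\theta)_{j\uparrow}}{(j+1)!}\cdot\frac{(\theta)_{N-j\uparrow}}{(N-j)!},$$
so everything comes down to computing $\Sigma_N$.

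To do this I would pass to generating functions. Since $\sum_{n\ge0}\frac{(\theta)_{n\uparrow}}{n!}s^n=(1-s)^{-\theta}$, integrating term by term gives $\sum_{j\ge0}\frac{(\theta)_{j\uparrow}}{(j+1)!}s^j=\frac{1}{s}\int_0^s(1-u)^{-\theta}\,du=\frac{1-(1-s)^{1-\theta}}{(1-\theta)s}$ (legitimate since $\theta\in(0,1)$, so $\theta\neq1$). By the Cauchy product, $\Sigma_N+\frac{(\theta)_{N\uparrow}}{(N+1)!}$, i.e.\ the sum extended to include the term $j=N$, is the coefficient of $s^N$ in
$$\frac{1-(1-s)^{1-\theta}}{(1-\theta)s}\cdot(1-s)^{-\theta}=\frac{(1-s)^{-\theta}-(1-s)^{1-2\theta}}{(1-\theta)s},$$
which equals $\frac{1}{1-\theta}\left(\frac{(\theta)_{N+1\uparrow}}{(N+1)!}-\frac{(2\theta-1)_{N+1\uparrow}}{(N+1)!}\right)$. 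Using $(2\theta-1)_{N+1\uparrow}=(2\theta-1)(2\theta)_{N\uparrow}$ and $(\theta)_{N+1\uparrow}=(\theta+N)(\theta)_{N\uparrow}$, one substitutes back into $\sum_{j=0}^{N-1}P_N(0,j)=\frac{(1-\theta)N!}{(\theta)_{N\uparrow}}\Sigma_N$, which after cancellation becomes $\frac{2\theta+N-1}{N+1}-\frac{(2\theta-1)(2\theta)_{N\uparrow}}{(N+1)(\theta)_{N\uparrow}}$; subtracting from $1$ and using $1-\frac{2\theta+N-1}{N+1}=\frac{2(1-\theta)}{N+1}$ yields exactly the claimed expression.

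As an alternative avoiding generating functions, $\Sigma_N$ can be evaluated by hand: from the elementary identity $\frac{(\theta)_{j\uparrow}}{(j+1)!}=\frac{1}{1-\theta}\left(\frac{(\theta)_{j\uparrow}}{j!}-\frac{(\theta)_{j+1\uparrow}}{(j+1)!}\right)$ (equivalent to $(j+1)-(\theta+j)=1-\theta$), the sum $\Sigma_N$ splits into two sums which, after re-indexing, are partial Vandermonde--Chu sums $\sum_k\binom{n}{k}(\theta)_{k\uparrow}(\theta)_{n-k\uparrow}=(2\theta)_{n\uparrow}$ for $n=N$ and $n=N+1$, from which only a couple of easily identified endpoint terms are missing.

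I do not expect any conceptual obstacle: as the surrounding text says, this is purely an analytic identity. The only points that need care are the bookkeeping of the omitted/extra endpoint terms (the $j=N$ term absent from \eqref{eq55}, and the $m=0,\,N+1$ terms in the full Vandermonde sums after re-indexing) and the Pochhammer arithmetic in the final simplification, both routine; and one should note that the derivation is valid for all $\theta\in(0,1)$, so that the factor $1-\theta$ is never zero.
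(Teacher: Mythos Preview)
Your proposal is correct and follows essentially the same approach as the paper. The paper also starts from $P_N(0,N)=1-\sum_{j=0}^{N-1}P_N(0,j)$ and evaluates the sum via the generating-function identity $(1-s)^{-\theta}\cdot(1-s)^{-\theta}=(1-s)^{-2\theta}$, which gives $\sum_{j=0}^{n}p_jp_{n-j}=(2\theta)_{n\uparrow}/n!$ for $n=N,N+1$; your ``alternative'' route (splitting $\frac{(\theta)_{j\uparrow}}{(j+1)!}=\frac{1}{1-\theta}\big(\frac{(\theta)_{j\uparrow}}{j!}-\frac{(\theta)_{j+1\uparrow}}{(j+1)!}\big)$ and invoking Vandermonde--Chu) is exactly this computation, while your main route just packages the same Cauchy product after first integrating one factor to absorb the $1/(j+1)$.
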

\begin{proof}
Note that $P_N(0,N) = 1 - \sum_{j=0}^{N-1} P_N(0,j)$. Thus , it suffices to show that
\begin{equation}
\label{magind}
\sum_{j=0}^{N-1} p_j p_{N-j} - \sum_{j=0}^{N-1} p_{j+1} p_{N-j} = \frac{1}{(N+1) !} \Bigg[ (N-2 \theta-1)(\theta)_{N \uparrow} + (1-2 \theta) (2 \theta)_{N \uparrow}\Bigg].
\end{equation}
Furthermore, for $|s|<1$,
\begin{equation*}
(1-s)^{-2\theta} = \left(\sum_{j=0}^{\infty}p_j s^j \right)^2 = \sum_{N=0}^{\infty} \left(\sum_{j=0}^N p_j p_{N-j} \right)s^j.
\end{equation*}
By identifying the coefficients on both sides, we get
\begin{equation*}
\sum_{j=0}^N p_j p_{N-j} = \frac{(2 \theta)_{N \uparrow}}{N !} \quad \mbox{and} \quad \sum_{j=0}^{N+1} p_jp_{N+1-j} = \frac{(2 \theta)_{N+1 \uparrow}}{(N+1) !}.
\end{equation*}
Therefore,
\begin{equation*}
\sum_{j=0}^{N-1} p_j p_{N-j} - \sum_{j=0}^{N-1} p_{j+1} p_{N-j} = \left[ \frac{(2 \theta)_{N \uparrow}}{N !} - \frac{(\theta)_{N \uparrow}}{N !}\right] - \left[\frac{(2 \theta)_{N+1 \uparrow}}{(N+1) !} - \frac{(\theta)_{N+1 \uparrow}}{(N+1) !}\right],
\end{equation*}
which leads to \eqref{magind}.
\end{proof}

\quad When $F$ is symmetric and continuous, the above results can be simplified. In this case, $\mathbb{P}(S_n \geq 0) = \mathbb{P}(S_n>0) = \frac{1}{2}$.
\begin{corollary}
Assume that $F$ is symmetric and continuous. Then the stationary distribution of the argmin chain $(A_N(n); n \geq 0)$ is given by
\begin{equation}
\Pi_N(k) = \binom{2k}{k} \binom{2N-2k}{N-k} 2^{-2N} \quad \mbox{for}~0 \leq k \leq N.
\end{equation}
In addition, the transition probabilities are
\begin{equation}
P_N(i,N) = \frac{1}{2(N+1-i)} \quad \mbox{and} \quad P_N(i,i-1) = \frac{2N +1 -2i}{2(N+1-i)} \quad \mbox{for}~0<i \leq N;
\end{equation}
\begin{equation}
P_N(0,j) = \frac{\binom{N}{j}^2}{2(j+1)\binom{2N}{2j}} \quad \mbox{for}~0 \leq j <N \quad \mbox{and} \quad P_N(0,N) = \frac{1}{N+1}.
\end{equation}
\end{corollary}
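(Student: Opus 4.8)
The plan is to derive this corollary as a direct specialization of part (1) of Theorem~\ref{discreteth}, taking the positivity parameter $\theta=\tfrac12$. First I would observe that when $F$ is symmetric and continuous, each partial sum $S_n$ has a symmetric continuous distribution, so $\mathbb{P}(S_n\ge 0)=\mathbb{P}(S_n>0)=\tfrac12$ for all $n\ge 1$; hence the hypothesis of Theorem~\ref{discreteth}(1) holds with $\theta=\tfrac12$, and by \eqref{pcts} we have $p_n=\widetilde p_n=(\tfrac12)_{n\uparrow}/n!$.

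The key computation is to evaluate the Pochhammer symbol at $\theta=\tfrac12$. Using
$(\tfrac12)_{n\uparrow}=\prod_{i=0}^{n-1}\bigl(i+\tfrac12\bigr)=\dfrac{1\cdot 3\cdots(2n-1)}{2^n}=\dfrac{(2n)!}{4^n\,n!}$,
one gets $p_n=\widetilde p_n=\binom{2n}{n}2^{-2n}$. Substituting this into \eqref{disarcsine} (equivalently into \eqref{stationaryPi}) immediately yields $\Pi_N(k)=\binom{2k}{k}\binom{2N-2k}{N-k}2^{-2N}$.

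For the transition probabilities I would substitute $\theta=\tfrac12$ into \eqref{eq5354}--\eqref{eq56}. Equation \eqref{eq5354} gives at once $P_N(i,N)=\tfrac{1}{2(N+1-i)}$ and $P_N(i,i-1)=\tfrac{2N+1-2i}{2(N+1-i)}$ for $0<i\le N$. In \eqref{eq55} the ratio of Pochhammer symbols collapses: writing each as $(2m)!/(4^m m!)$ and cancelling the powers of $4$ gives
$\dfrac{(\tfrac12)_{j\uparrow}(\tfrac12)_{N-j\uparrow}}{(\tfrac12)_{N\uparrow}}=\binom{N}{j}\big/\binom{2N}{2j}$,
so $P_N(0,j)=\dfrac{\binom{N}{j}^2}{2(j+1)\binom{2N}{2j}}$ for $0\le j<N$. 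Finally, in \eqref{eq56} the factor $1-2\theta$ vanishes, leaving $P_N(0,N)=\tfrac{1}{N+1}$ (consistently with $P_N(0,N)=1-\sum_{j=0}^{N-1}P_N(0,j)$).

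Since everything reduces to substitution into already-established formulas together with elementary factorial identities, there is no genuine obstacle here; the only point requiring mild care is the identity $\dfrac{(\tfrac12)_{j\uparrow}(\tfrac12)_{N-j\uparrow}}{(\tfrac12)_{N\uparrow}}=\binom{N}{j}/\binom{2N}{2j}$, and the bookkeeping of powers of $2$ in $\Pi_N(k)$.
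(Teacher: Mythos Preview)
Your proposal is correct and matches the paper's approach exactly: the paper states this corollary immediately after noting that symmetric continuous $F$ gives $\mathbb{P}(S_n\ge 0)=\mathbb{P}(S_n>0)=\tfrac12$, so the corollary is understood as the specialization of \eqref{disarcsine}--\eqref{eq56} to $\theta=\tfrac12$. Your verification of the Pochhammer identity $(\tfrac12)_{n\uparrow}=(2n)!/(4^n n!)$ and the ratio $(\tfrac12)_{j\uparrow}(\tfrac12)_{N-j\uparrow}/(\tfrac12)_{N\uparrow}=\binom{N}{j}/\binom{2N}{2j}$ is exactly the bookkeeping needed, and the observation that $1-2\theta=0$ kills the second term in \eqref{eq56} is the intended shortcut.
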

\subsection{Simple symmetric random walks} 
\label{s52}
In \cite[Chapter III.3]{Feller}, Feller found for a simple symmetric walk,
\begin{equation}
\label{ptil2}
\widetilde{p}_{2n} = \widetilde{p}_{2n+1} = \frac{(\frac{1}{2})_{n \uparrow}}{2 \cdot n!} \quad \mbox{for all}~n \geq 1,
\end{equation}
and 
\begin{equation}
\label{p2}
p_{2n-1} = p_{2n} = \frac{(\frac{1}{2})_{n \uparrow}}{ n!} \quad \mbox{for all}~n \geq 1.
\end{equation}

\quad By injecting \eqref{ptil2} and \eqref{p2} into \eqref{stationaryPi}, \eqref{trans1} and \eqref{trans2}, we get \eqref{eq513}, \eqref{eq514} and \eqref{eq515}. The formula \eqref{eq516} is obtained by the following lemma.
\begin{lemma}
\label{lem12}
\begin{equation*}
P_N(0,N) = \left\{ \begin{array}{ccl}
         \frac{1}{N+1} & \mbox{if}~ N ~\mbox{is odd}; \\ [8 pt]
         \frac{2}{N+2} & \mbox{if} ~N ~\mbox{is even}.
                \end{array}\right.
\end{equation*}
\end{lemma}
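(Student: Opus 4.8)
The plan is to compute $P_N(0,N)$ from the complementary identity $P_N(0,N) = 1 - \sum_{j=0}^{N-1} P_N(0,j)$ together with the formula $P_N(0,j) = (p_j - p_{j+1})\widetilde p_{N-j}/\widetilde p_N$ from \eqref{trans2}, feeding in the explicit values \eqref{ptil2} and \eqref{p2}. Since $p_{2n-1} = p_{2n}$, the factor $p_j - p_{j+1}$ vanishes for odd $j$, so only the terms with $j = 2k$ survive, and for those a one-line computation gives $p_{2k} - p_{2k+1} = \tfrac12 (\tfrac12)_{k \uparrow}/(k+1)!$. Writing $N = 2M$ or $N = 2M+1$, so that $\widetilde p_N = (\tfrac12)_{M \uparrow}/(2\,M!)$ in either case and $\widetilde p_{N-2k} = (\tfrac12)_{M-k \uparrow}/(2(M-k)!)$ over the whole relevant range of $k$ (note $N - 2k \ge 1$ throughout), the claim reduces to evaluating the finite Pochhammer convolution
\[
\Sigma_{M,N} := \sum_{0 \le 2k \le N-1} \frac{(\tfrac12)_{k \uparrow}\,(\tfrac12)_{M-k \uparrow}}{(k+1)!\,(M-k)!},
\]
where $k$ runs over $\{0,\dots,M\}$ when $N = 2M+1$ and over $\{0,\dots,M-1\}$ when $N = 2M$.

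I would evaluate $\Sigma_{M,N}$ by passing to Catalan and central binomial numbers via $(\tfrac12)_{m \uparrow}/m! = \binom{2m}{m}4^{-m}$ and $(\tfrac12)_{m \uparrow}/(m+1)! = C_m 4^{-m}$ with $C_m = \tfrac1{m+1}\binom{2m}{m}$. The complete sum over $0 \le k \le M$ then becomes $4^{-M}\sum_{k=0}^M C_k \binom{2(M-k)}{M-k}$, and the inner convolution equals $\binom{2M+1}{M}$. This last identity I would establish by multiplying the generating functions $\sum_k C_k x^k = (1 - \sqrt{1-4x})/(2x)$ and $\sum_k \binom{2k}{k} x^k = (1-4x)^{-1/2}$ and recognising the product $\tfrac1{2x}\big((1-4x)^{-1/2} - 1\big)$ as $\sum_M \binom{2M+1}{M} x^M$.

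It remains to substitute back into $P_N(0,N) = 1 - \tfrac14\,\Sigma_{M,N}/\widetilde p_N$, using $1/\widetilde p_N = 2\cdot 4^M/\binom{2M}{M}$, and to treat the two parities. For $N = 2M+1$ the sum is complete, $\Sigma_{M,N} = 4^{-M}\binom{2M+1}{M}$, and simplifying $\binom{2M+1}{M}/\binom{2M}{M} = (2M+1)/(M+1)$ yields $P_N(0,N) = \tfrac1{2(M+1)} = \tfrac1{N+1}$. For $N = 2M$ the index $k = M$ is excluded, so one subtracts the boundary term $(\tfrac12)_{M \uparrow}/(M+1)! = C_M 4^{-M}$ to get $\Sigma_{M,N} = 4^{-M}\big(\binom{2M+1}{M} - C_M\big) = 4^{-M}\cdot\tfrac{2M}{M+1}\binom{2M}{M}$, and the same arithmetic gives $P_N(0,N) = 1 - \tfrac{M}{M+1} = \tfrac1{M+1} = \tfrac2{N+2}$.

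The only nonroutine ingredient is the Catalan–central-binomial convolution $\sum_{k=0}^M C_k \binom{2(M-k)}{M-k} = \binom{2M+1}{M}$; everything else is bookkeeping with Pochhammer symbols, the one genuine subtlety being the parity-dependent range of summation and the single missing term when $N$ is even. One could alternatively imitate the proof of Lemma \ref{lem11}, working directly with the square of $\widetilde p(s)$, but the parity structure of the simple walk (namely $\widetilde p_{2m} = \widetilde p_{2m+1}$ and $p_{2n-1} = p_{2n}$) makes the Catalan route more transparent.
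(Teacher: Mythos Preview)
Your proof is correct, but it takes a different route from the paper's. You substitute the explicit values \eqref{ptil2}--\eqref{p2} immediately, convert to Catalan and central binomial numbers, and reduce everything to the convolution identity $\sum_{k=0}^M C_k\binom{2(M-k)}{M-k}=\binom{2M+1}{M}$, which you then verify by multiplying generating functions. The paper instead keeps the sum in the form $\sum_{j}(p_j-p_{j+1})\widetilde p_{N-j}$ and exploits the fluctuation identity $p(s)\,\widetilde p(s)=(1-s)^{-1}$ (valid because $\mathbb{P}(S_n\ge 0)+\mathbb{P}(S_n>0)=1$ for a symmetric walk), so that $\sum_{j=0}^N p_j\widetilde p_{N-j}=1$ for every $N$; a short reindexing then collapses the whole sum to $p_{N+1}+\widetilde p_{N+1}-p_N$, and only these three explicit values are needed at the end. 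Your approach is more combinatorial and self-contained, while the paper's is shorter and highlights that the mechanism is really the Sparre Andersen identity rather than anything specific to Catalan numbers. Incidentally, your closing remark about ``the square of $\widetilde p(s)$'' slightly misses the mark: the relevant product here is $p(s)\widetilde p(s)$, not $\widetilde p(s)^2$, since for the simple walk $p_n\neq\widetilde p_n$.
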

\begin{proof}
Note that $P_N(0,N) = 1 -\sum_{j=0}^{N-1} P_N(0,j)$. Thus, it suffices to show that
\begin{equation}
\label{magindbis}
\sum_{j=0}^{N-1} p_j p_{N-j} - \sum_{j=0}^{N-1} p_{j+1}p_{N-j} = 
\left\{ \begin{array}{ccl}
         \frac{N}{N+1}\widetilde{p}_N & \mbox{if}~ N ~\mbox{is odd}; \\ 
         \frac{N}{N+2}\widetilde{p}_N & \mbox{if} ~N ~\mbox{is even}.
                \end{array}\right.
\end{equation}
Furthermore, for $s<1$,
\begin{equation*}
\frac{1}{1-s} = \left(\sum_{j=0}^{\infty}p_j s^j \right) \left(\sum_{j=0}^{\infty}\widetilde{p}_j s^j \right) = \sum_{N=0}^{\infty} \left(\sum_{j=0}^N p_j \widetilde{p}_{N-j} \right)s^j.
\end{equation*}
By identifying the coefficients on both sides, we get
\begin{equation*}
\sum_{j=0}^N p_j \widetilde{p}_{N-j} = \sum_{j=0}^{N+1} p_j \widetilde{p}_{N+1-j} = 1.
\end{equation*}
Therefore,
\begin{align*}
\sum_{j=0}^{N-1} p_j p_{N-j} - \sum_{j=0}^{N-1} p_{j+1}p_{N-j} = (1-p_N) - (1-p_{N+1}-\widetilde{p}_{N+1}),
\end{align*}
which leads to \eqref{magindbis}.
\end{proof}
\section{The argmin process of L\'evy processes}
\label{s3}
\quad In this section, we consider the argmin process $(\alpha^X_t; t \geq 0)$ of a L\'evy process $(X_t; t \geq 0)$. 
According to the L\'evy-Khintchine formula, the characteristic exponent of $(X_t; t \geq 0)$ is given by
$$\Psi_X (\theta) : = ia \theta + \frac{\sigma^2}{2} \theta^2 + \int_{\mathbb{R}} (1-e^{i\theta x} + i \theta x 1_{\{|x|<1\}}) \Pi(dx),$$ 
where $a \in \mathbb{R}$, $\sigma \geq 0$, and  $\Pi(\cdot)$ is the L\'evy measure satisfying $\int_{\mathbb{R}} \min(1,x^2) \Pi(dx) < \infty$. The L\'evy process $X$ is a compound Poisson process if and only if 
$\sigma = 0$ and $\Pi(\mathbb{R}) < \infty$. 
In this case, the process $X$ has the following representation:
\begin{equation}
\label{CPP}
X_t = ct + \sum_{i=1}^{N_t} Y_i \quad \mbox{for all}~t>0,
\end{equation}
where 
$c = -a - \int_{|x|<1} x \Pi(dx)$,
$(N_t; t \geq 0)$ is a Poisson process with rate $\lambda$, and $(Y_i; i \geq 1)$ are independent and identically distributed random variables with cumulative distribution function $F$, independent of $N$ and satisfying $\lambda F(dx) = \Pi(dx)$.  
See Bertoin \cite{Bertoin} and Sato \cite{Sato} for further development on L\'evy processes.

\quad In Section \ref{s31}, we review Millar-Denisov's decomposition for L\'evy processes with continuous distribution. In Section \ref{s32}, we consider a path decomposition for compound Poisson processes. Finally in Section \ref{s33}, we explain how to adapt the arguments in Pitman and Tang \cite{PTargmin} to prove Theorem \ref{mainLevy}.
\subsection{Millar-Denisov's decomposition for L\'evy processes}
\label{s31}
For $A \in \mathcal{B}(\mathbb{R})$, let 
$T_A : = \inf\{t>0: X_t \in A\}$
be the hitting time of $A$ by $(X_t; t \geq 0)$. 
Recall that $0$ is regular for the set $A$ if $\mathbb{P}(T_A = 0) = 1$.

\quad Assume that $X$ is not a compound Poisson process with drift, which is equivalent to
\begin{itemize}
\item[(CD).]
For all $t>0$, $X_t$ has a continuous distribution; that is for all $x \in \mathbb{R}$,
$\mathbb{P}(X_t = x) = 0$.
\end{itemize}
See Sato \cite[Theorem 27.4]{Sato}.
According to Blumenthal's zero-one law, $0$ is regular for at least one of the half-lines $(-\infty,0)$ and $(0,\infty)$. There are three subcases: 
\begin{itemize}
\item[(RB).]
$0$ is regular for both half-lines $(-\infty,0)$ and $(0,\infty)$;
\item[(R$+$).]
$0$ is regular for the positive half-line $(0,\infty)$ but not for the negative half-line $(-\infty,0)$;
\item[(R$-$).]
$0$ is regular for the negative half-line $(-\infty,0)$ but not for the positive half-line $(0,\infty)$.
\end{itemize}

\quad Millar \cite{Millar78} proved that almost surely $(X_t; 0 \leq t \leq 1)$ achieves its minimum at a unique time $A \in [0,1]$, and
\begin{itemize}
\item
under the assumption (RB), $X_{A-} = X_A = \inf_{t \in [0,1]}X_t$ almost surely;  
\item
under the assumption (R$+$), $X_{A-} > X_A = \inf_{t \in [0,1]}X_t$ almost surely;
\item
under the assumption (R$-$), $X_A > X_{A-} = \inf_{t \in [0,1]}X_t$ almost surely.
\end{itemize}
The following result is a simple consequence of Millar \cite[Proposition 4.2]{Millar78}.

\begin{theorem} \cite{Millar78}
\label{M78}
Assume that $(X_t; 0 \leq t \leq 1)$ is not a compound Poisson process with drift. Let $A$ be the a.s. unique time such that
$
\inf_{t \in [0,1]} X_t = \min (X_{A-}, X_A).
$
Given $A$, the L\'evy path is decomposed into two conditionally independent pieces:
$$\left(X_{(A-t)-}-\inf_{u \in [0,1]}X_u; 0 \leq t \leq A\right) \quad \mbox{and} \quad \left(X_{A+t}-\inf_{u \in [0,1]}X_u; 0 \leq t \leq 1-A\right).$$
\end{theorem}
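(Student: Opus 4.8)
The plan is to derive the statement as a specialization of Millar's Proposition~4.2, the auxiliary facts about the minimum of a L\'evy path recalled above supplying the rest of the argument. Since $X$ is assumed not to be a compound Poisson process with drift, condition (CD) holds, so Millar's results quoted above apply: almost surely $(X_t;0\le t\le 1)$ attains its infimum $\mu:=\inf_{t\in[0,1]}X_t$ at a unique time $A\in[0,1]$, one has $\mu=\min(X_{A-},X_A)$, and the regime among (RB), (R$+$), (R$-$) --- fixed, not random --- decides whether the infimum is reached continuously or through a jump, and from which side. In particular $A$ is a bona fide $[0,1]$-valued random variable (defined up to null sets), and the two fragments
\[
L:=\bigl(X_{(A-t)-}-\mu;\ 0\le t\le A\bigr),\qquad
R:=\bigl(X_{A+t}-\mu;\ 0\le t\le 1-A\bigr)
\]
are well-defined random right-continuous paths whose lengths $A$ and $1-A$ are measurable functions of $A$.

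Next I would invoke Millar's Proposition~4.2, which exhibits the almost surely unique minimizer $A$ as a \emph{splitting time} for $X$ over $[0,1]$: conditionally on $A$, the reversed pre-minimum path and the post-minimum path of $X$, each shifted down by the minimum level $\mu$, are independent. In the notation just introduced this says exactly that $L$ and $R$ are conditionally independent given $A$, which is the assertion of the theorem. Thus, once the preliminaries above are recorded, no further probabilistic work is needed beyond reading Millar's proposition in the present notation.

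The one point that genuinely needs care --- and hence the main obstacle in writing this out in full --- is the translation between Millar's set-up and the present one: bringing his proposition, stated at the level of the minimum of a Markov process, to the deterministic horizon $[0,1]$, and checking that the recentring by $\mu$ and the orientation of the reversed fragment $L$ are compatible with his bookkeeping. Concretely, under (R$+$) and (R$-$) one must verify that the jump carrying the path into (respectively, out of) its infimum is attached to the right endpoint of the right fragment --- it makes $L(0)=X_{A-}-\mu>0$ under (R$+$) and $R(0)=X_A-\mu>0$ under (R$-$) --- which is precisely what Millar's trichotomy quoted above encodes, so the matching is direct but should be spelled out. If instead one wanted an argument independent of Millar, the natural route would be to discretize: apply Denisov's decomposition for random walks (Theorem~\ref{DenisovRW}) to the embedded walk $(X_{k/n})_{0\le k\le n}$, whose conditional-on-argmin independence is inherited by weak limits, and then let $n\to\infty$; there the obstacle instead becomes controlling the joint convergence of the conditioned pre- and post-minimum walks together with the continuity of the argmin functional at the almost surely unique minimizer of $X$.
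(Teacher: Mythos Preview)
Your proposal is correct and matches the paper's approach exactly: the paper does not give an independent proof of this theorem but simply records it as ``a simple consequence of Millar \cite[Proposition 4.2]{Millar78},'' which is precisely the route you take. Your additional remarks on translating Millar's splitting-time statement to the deterministic horizon $[0,1]$ and handling the jump attachment under (R$+$)/(R$-$) are helpful elaborations beyond what the paper spells out, but the core argument is the same.
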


\quad In \cite{Millar78}, Millar provided the law of the post-$A$ process $\left(X_{A+t}-\inf_{t \in [0,1]}X_t; 0 \leq t \leq 1-A\right)$ but he did not mention the law of the pre-$A$ process $\left(X_{(A-t)-}-\inf_{t \in [0,1]}X_t; 0 \leq t \leq A\right)$. Relying on Chaumont-Doney's construction \cite{CD10} of L\'evy meanders, Uribe Bravo \cite{UB14} proved that if $(X_t; 0 \leq t \leq 1)$ is not a compound Poisson process with drift and satisfies the assumption (RB), then
\begin{itemize}
\item
$\left(X_{(A-t)-}-\inf_{u \in [0,1]}X_u; 0 \leq t \leq A\right)$ is a L\'evy meander of length $A$;
\item
$(X_{A+t}-\inf_{u \in [0,1]}X_u; 0 \leq t \leq 1-A)$ is a L\'evy meander of length $1-A$.
\end{itemize}
This result generalizes Denisov's decomposition to L\'evy processes with continuous distribution. See also Chaumont \cite{Chaumont97} for related results for stable L\'evy processes.
\subsection{A path decomposition for compound Poisson processes}
\label{s32}
Here we give a path decomposition for compound Poisson processes, which is left out in the literature. Since a compound Poisson process is a continuous-time random walk, our construction is based on Denisov's decomposition for random walks.

\quad Let $(X_t; 0 \leq t \leq 1)$ be a compound Poisson process defined by \eqref{CPP}. Let 
\begin{equation*}
A : = \sup\left\{0 \leq s \leq 1: X_{s} = \inf_{u \in [0,1]} X_{u}\right\}
\end{equation*}
be the last time at which $X$ achieves its minimum on $[0,1]$. Let $\{\xi_1, \cdots, \xi_N\}$ be the jumping positions of $(N_t; 0 \leq t \leq 1)$, with $N:=N_1$ and $0=:\xi_0 < \xi_1<\cdots <\xi_{N} < \xi_{N+1}:=1$.

\quad Given $\xi_1,\cdots,\xi_{N}$ and $A = \xi_k$ for some $k \leq N$, we distinguish two cases:
\vskip 6 pt
{\bf Case $1$.} $c \leq 0$. Then $\inf_{t \in [0,1]} X_t = X_{A-}$. Define $Y^c_i:=Y_i + c(\xi_{i+1} - \xi_i)$ for $1 \leq i \leq N$.
Let $\overset{\rightarrow}{Z}$ be distributed as $\overset{\rightarrow \quad \quad}{S^{Y^c}_{N-k+1}}(k-1)$ conditioned to stay positive, and 
$\overset{\leftarrow}{Z}$ be distributed as $\overset{\leftarrow \quad}{S^{Y^c}_{k-1}}(k-1)$ conditioned to stay non-negative. Define
$$
\overset{\leftarrow}{X}_t : =  \left\{ \begin{array}{cl}        
-ct  & \mbox{for}~0 \leq t < A - \xi_{k-1}, \\ 
\overset{\leftarrow}{Z}_1-c(t-A + \xi_{k-1}) & \mbox{for}~ A - \xi_{k-1} \leq  t < A - \xi_{k-2}, \\ 
\vdots & \vdots \\
\overset{\leftarrow}{Z}_{k-1} - c(t-A+\xi_1)  & \mbox{for}~A - \xi_1\leq t \leq A.
\end{array}\right.
$$
and
$$
\overset{\rightarrow}{X}_t:=  \left\{ \begin{array}{cl}        
\overset{\rightarrow}{Z}_1- c(\xi_{k+1} - A - t)  & \mbox{for}~0 \leq t < \xi_{k+1} - A, \\ 
\overset{\rightarrow}{Z}_2 - c(\xi_{k+2} - A - t)& \mbox{for}~ \xi_{k+1} - A \leq  t < \xi_{k+2} - A, \\ 
\vdots & \vdots \\
\overset{\rightarrow}{Z}_{N-k+1}-c(1-A-t)  & \mbox{for}~\xi_N - A \leq t \leq 1 - A.
\end{array}\right.
$$
{\bf Case $2$.} $c>0$. Then $\inf_{t \in [0,1]} X_t = X_{A}$. Define $Y^c_i:=Y_i + c(\xi_{i} - \xi_{i-1})$ for $1 \leq i \leq N$.
Let $\overset{\rightarrow}{Z}$ be distributed as $\overset{\rightarrow \quad}{S^{Y^c}_{N-k}}(k)$ conditioned to stay positive, and 
$\overset{\leftarrow}{Z}$ be distributed as $\overset{\leftarrow \quad}{S^{Y^c}_{k}}(k)$ conditioned to stay non-negative. Define
$$
\overset{\leftarrow}{X}_t : =  \left\{ \begin{array}{cl}        
\overset{\leftarrow}{Z}_1  + c(A - \xi_{k-1} - t)& \mbox{for}~0 \leq t < A - \xi_{k-1}, \\ 
\overset{\leftarrow}{Z}_2 + c(A - \xi_{k-2} - t) & \mbox{for}~ A - \xi_{k-1} \leq  t < A - \xi_{k-2}, \\ 
\vdots & \vdots \\
\overset{\leftarrow}{Z}_{k} + c(A - t)  & \mbox{for}~A - \xi_1\leq t \leq A.
\end{array}\right.
$$
and
$$
\overset{\rightarrow}{X}_t:=  \left\{ \begin{array}{cl}        
ct  & \mbox{for}~0 \leq t < \xi_{k+1} - A, \\ 
\overset{\rightarrow}{Z}_1 + c(t -\xi_{k+1} + A )& \mbox{for}~ \xi_{k+1} - A \leq  t < \xi_{k+2} - A, \\ 
\vdots & \vdots \\
\overset{\rightarrow}{Z}_{N-k} + c(t-\xi_N + A)  & \mbox{for}~\xi_N - A \leq t \leq 1 - A.
\end{array}\right.
$$
By Theorem \ref{DenisovRW}, 
the path of $X$ is decomposed into two conditionally independent pieces:
\begin{equation}
\left(X_{(A-t)-}-\inf_{u \in [0,1]}X_u; 0 \leq t \leq A\right) \stackrel{(d)}{=} (\overset{\leftarrow}{X}_t; 0 \leq t \leq A),
\end{equation}
and
\begin{equation}
\left(X_{A+t}-\inf_{u \in [0,1]}X_u; 0 \leq t \leq 1-A \right) \stackrel{(d)}{=}  (\overset{\rightarrow}{X}_t; 0 \leq t \leq 1-A).
\end{equation}
\subsection{The Markov property of the argmin process}
\label{s33}
Combining the results in the last two sections, we have the following corollary which is a slight extension to Theorem \ref{M78}.
\begin{corollary} 
\label{haha}
Let $(X_t; 0 \leq t \leq 1)$ be a real-valued L\'evy process. Let 
$$A : = \sup\left\{0 \leq s \leq 1: X_{s} = \inf_{u \in [0,1]} X_{u}\right\}$$
be the last time at which $X$ achieves its minimum on $[0,1]$. Given $A$, the path of $X$ is decomposed into two conditionally independent pieces:
$$\left(X_{(A-t)-}-\inf_{u \in [0,1]}X_u; 0 \leq t \leq A\right) \quad \mbox{and} \quad \left(X_{A+t}-\inf_{u \in [0,1]}X_u; 0 \leq t \leq 1-A\right).$$
\end{corollary}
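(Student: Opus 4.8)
The plan is to prove Corollary \ref{haha} by splitting into the two exhaustive and mutually exclusive cases isolated already in the excerpt: either $X$ is a compound Poisson process with drift, or $X$ satisfies (CD), i.e. $X_t$ has a continuous distribution for every $t>0$ (see Sato \cite[Theorem 27.4]{Sato}). These two cases are handled by two distinct decompositions, and the only real content of the corollary is to observe that they cover all L\'evy processes and that in each case the claimed conditional independence holds with exactly the pre-$A$ and post-$A$ processes named in the statement.

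First I would treat the continuous-distribution case. Here $X$ is not a compound Poisson process with drift, so Theorem \ref{M78} applies verbatim: the a.s.\ unique time $A$ at which $\inf_{t\in[0,1]}X_t = \min(X_{A-},X_A)$ is realized, and given $A$ the two pieces $(X_{(A-t)-}-\inf_u X_u; 0\le t\le A)$ and $(X_{A+t}-\inf_u X_u;0\le t\le 1-A)$ are conditionally independent. The one point that needs a remark is that in the present corollary $A$ is defined as the \emph{last} time the infimum is attained, i.e. $A=\sup\{s\in[0,1]: X_s=\inf_{u\in[0,1]}X_u\}$, whereas Millar's $A$ is the unique argmin in the sense of $\min(X_{A-},X_A)$. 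Under (CD), Millar \cite{Millar78} shows the minimum location is a.s.\ unique, and one checks that in cases (RB) and (R$-$) the last time the path value equals the infimum coincides with Millar's $A$, while in case (R$+$) the infimum value $X_A$ is attained only at the single instant $A$ (since $X_{A-}>X_A$), so again the two notions agree a.s. Hence the corollary's $A$ equals Millar's $A$ a.s., and the decomposition transfers directly.

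Next I would treat the compound-Poisson-with-drift case, for which Section \ref{s32} has already done the work. For $X_t = ct+\sum_{i=1}^{N_t}Y_i$ with $N$ a rate-$\lambda$ Poisson process, condition first on the jump times $\xi_1<\dots<\xi_N$ of $N$ on $[0,1]$ and on the index $k$ with $A=\xi_k$. Conditionally on this data the increments $Y_1,\dots,Y_N$ form a finite sequence of i.i.d.\ variables, and — after the linear reparametrization absorbing the drift into the jumps via $Y^c_i$, with the sign of $c$ dictating whether the infimum sits at $A-$ or at $A$ — Denisov's decomposition for random walks (Theorem \ref{DenisovRW}) applied to the walk $S^{Y^c}$ gives, given $A_N = $ (the discrete argmin), two conditionally independent pieces: a time-reversed piece conditioned to stay non-negative and a forward piece conditioned to stay positive. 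Re-inserting the deterministic drift segments between consecutive jumps (this is exactly the explicit construction of $\overset{\leftarrow}{X}$ and $\overset{\rightarrow}{X}$ written out in Section \ref{s32}) reconstitutes $(X_{(A-t)-}-\inf_u X_u;0\le t\le A)$ and $(X_{A+t}-\inf_u X_u;0\le t\le 1-A)$ as the two pieces. Conditional independence given $(\xi_1,\dots,\xi_N,k)$ plus the fact that, after integrating out the jump-time configuration, $A$ is the remaining conditioning variable, yields conditional independence given $A$ alone by a routine tower-property argument.

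The main obstacle — really the only subtle point — is the bookkeeping needed to identify the abstractly-defined pre-$A$ and post-$A$ fragments with the concretely-constructed $\overset{\leftarrow}{X}$ and $\overset{\rightarrow}{X}$ in the compound Poisson case, and in particular to verify that conditioning on the random walk's discrete argmin $A_N$ corresponds, after reparametrization, to conditioning on the continuous-time last-infimum time $A$, with the drift contributions between jumps correctly attached to the right side of the split and in the right time-direction (note the left fragment is read in reversed time, so the deterministic ramps of slope $\mp c$ must be reversed accordingly). Once that identification is in place, the conditional independence is immediate from Theorem \ref{DenisovRW}, and the continuous-distribution case is immediate from Theorem \ref{M78}; combining the two cases completes the proof.
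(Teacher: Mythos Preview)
Your proposal is correct and follows exactly the paper's approach: the paper states that Corollary \ref{haha} is obtained by ``combining the results in the last two sections,'' i.e.\ Theorem \ref{M78} for the continuous-distribution case and the explicit compound-Poisson construction of Section \ref{s32} for the remaining case, and you have simply fleshed out this combination. One small quibble: in case (R$-$) you claim the last time the path value equals the infimum coincides with Millar's $A$, but there $X_A>X_{A-}=\inf_u X_u$ and the set $\{s:X_s=\inf_u X_u\}$ is in fact empty, so the $\sup$ definition needs a convention; the paper is equally informal here, and the intended meaning throughout is Millar's unique time realizing $\min(X_{A-},X_A)=\inf_u X_u$.
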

With Corollary \ref{haha}, it is easy to adapt the argument of Pitman and Tang \cite[Proposition 3.4]{PTargmin} to prove that $(\alpha^X_t; t \geq 0)$ is a time-homogeneous Markov process. Here the detail is omitted.

\quad Now we turn to the stable L\'evy process. Let $(X_t; t \geq 0)$ be a stable L\'evy process with parameters $(\alpha,\beta)$, and neither $X$ nor $-X$ is a subordinator. It is well known that $0$ is regular for the reflected process $X- \underline{X}$. So It\^o's excursion theory can be applied to the process $X- \underline{X}$, see Sharpe \cite{Sharpe} for background on excursion theory of Markov processes. 

\quad Let ${\bf n}(d\epsilon)$ be the It\^o measure of excursions of $X-\underline{X}$ away from $0$. Monrad and Silverstein \cite{MS79} computed the law of lifetime $\zeta$ of excursions under ${\bf n}$:
\begin{equation}
{\bf n}(\zeta > t) = c \frac{t^{\rho-1}}{\Gamma (\rho)}  \quad \mbox{and} \quad {\bf n}(\zeta \in dt) = c(1-\rho) \frac{t^{\rho-2}}{\Gamma(\rho)}
\end{equation}
for some contant $c>0$. Following the argument of Pitman and Tang \cite[Remark 3.9]{PTargmin}, we have:
\begin{proposition}
Let $(X_t; t \geq 0)$ be a stable L\'evy process with parameters $(\alpha,\beta)$, and neither $X$ nor $-X$ is a subordinator. Then the jump rate of the argmin process $\alpha^X$ per unit time from $x \in (0,1)$ to $1$ is given by
\begin{equation}
\mu^{\uparrow 1}(x) = \frac{1 - \rho}{1-x} \quad \mbox{for}~0 < x <1.
\end{equation}
\end{proposition}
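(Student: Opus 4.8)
\emph{Proof plan.} Since the argmin process is stationary and Markov, by time‑homogeneity it is enough to study it just after time $0$ conditionally on $\alpha^X_0=x$ and to extract
\[
\mu^{\uparrow 1}(x)=\lim_{h\downarrow 0}\frac1h\,\mathbb P\!\left(\alpha^X\text{ jumps to }1\text{ during }(0,h]\mid\alpha^X_0=x\right).
\]
Since $\alpha^X_0=A$ with $A:=\sup\{s\in[0,1]: X_s=\inf_{u\in[0,1]}X_u\}$ the a.s.\ unique argmin of $X$ on $[0,1]$, conditioning on $\{\alpha^X_0=x\}$ amounts to conditioning on $\{A=x\}$; put $m:=\inf_{u\in[0,1]}X_u$ and $\ell:=1-x\in(0,1)$.

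First I would unwind the dynamics of $\alpha^X$ right after time $0$ on the event $\{A=x\}$. Let
\[
s^{*}:=\inf\{h>0: X_{1+h}\le m\}
\]
be the first time the frontier entering the sliding window meets the current window‑minimum level $m$; note $s^{*}>0$, since $X_1>m$ on $\{A=x\}$ when $x<1$ and paths are right‑continuous. For $0\le h<\min(x,s^{*})$ the location of $m$ stays inside the window $[h,1+h]$ and remains its infimum, so $\alpha^X_h=x-h$: the argmin drifts down at unit speed. If $s^{*}<x$, then at $h=s^{*}$ the infimum of $X$ over $[s^{*},1+s^{*}]$ is last attained at the right endpoint, so $\alpha^X_{s^{*}}=1$ while $\alpha^X_{s^{*}-}=x-s^{*}<1$, a genuine jump. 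Hence, up to null sets, $\{\alpha^X\text{ jumps to }1\text{ during }(0,h]\}=\{s^{*}\le h\}$ for $h<x$, and
\[
\mu^{\uparrow 1}(x)=\lim_{h\downarrow 0}\frac1h\,\mathbb P\!\left(s^{*}\le h\mid A=x\right).
\]

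Next I would compute this probability by excursion theory. By the Millar--Denisov decomposition (Corollary~\ref{haha} together with \cite{UB14}), given $\{A=x\}$ the post‑minimum fragment $(X_{x+s}-m;\,0\le s\le\ell)$ is a L\'evy meander of length $\ell$, and by the Markov property of $X$ at time $1$ it is continued, for $s>\ell$, by an independent copy of $X$ started from $X_1-m$. The crucial observation is that this glued path is precisely the It\^o excursion of the reflected process $X-\underline X$ conditioned on $\{\zeta>\ell\}$: the length‑$\ell$ meander is ${\bf n}(\cdot\mid\zeta>\ell)$ read on $[0,\ell]$, and beyond time $\ell$ an It\^o excursion runs as $X$ from its current height and is killed at the first $h$ with $X_{1+h}\le m$, that is, at $s^{*}$. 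Therefore $s^{*}$ coincides with the residual lifetime $\zeta-\ell$, and
\[
\mathbb P\!\left(s^{*}\le h\mid A=x\right)=\mathbb P\!\left(\zeta-\ell\le h\mid\zeta>\ell\right)=\frac{{\bf n}(\ell<\zeta\le\ell+h)}{{\bf n}(\zeta>\ell)}.
\]
Inserting the expressions ${\bf n}(\zeta>t)=c\,t^{\rho-1}/\Gamma(\rho)$ and ${\bf n}(\zeta\in dt)=c(1-\rho)t^{\rho-2}\,dt/\Gamma(\rho)$ recalled above, the numerator equals $c(1-\rho)\ell^{\rho-2}h/\Gamma(\rho)+o(h)$, so the quotient is $\tfrac{1-\rho}{\ell}h+o(h)$ and $\mu^{\uparrow 1}(x)=\tfrac{1-\rho}{1-x}$. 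For $\rho=\tfrac12$ this recovers the Brownian rate $\tfrac1{2(1-x)}$ of \cite[Remark~3.9]{PTargmin}, a useful sanity check.

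The main obstacle is the identification invoked above: that gluing the length‑$\ell$ L\'evy meander to its free continuation past time $\ell$ reproduces ${\bf n}(\cdot\mid\zeta>\ell)$, which is what lets me read $s^{*}$ off as $\zeta-\ell$. This rests on the standard description of the It\^o excursions of the regular reflected process $X-\underline X$ in terms of a sub‑Markovian semigroup (the L\'evy process killed on entering $(-\infty,0]$) together with the L\'evy meander constructions of Chaumont \cite{Chaumont97} and Chaumont--Doney \cite{CD10}. Two secondary points need attention but cause no real difficulty: the same computation must be carried out in the one‑sided regularity cases (R$+$) and (R$-$), where $m$ is attained just after, respectively just before, a jump of $X$; and the limit $h\downarrow 0$ is unaffected by whether $s^{*}$ is defined using $X_{1+h}\le m$ or $X_{1+h}<m$ (creeping). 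Granting this identification, the remainder is the short computation just given.
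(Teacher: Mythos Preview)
Your proposal is correct and follows essentially the same route as the paper, which simply points to \cite[Remark~3.9]{PTargmin}: identify the post-minimum fragment continued past the window with an It\^o excursion of $X-\underline X$ conditioned on $\{\zeta>1-x\}$, so that the jump to $1$ is the end of that excursion and the rate is the hazard rate $(1-\rho)/(1-x)$ of the lifetime law ${\bf n}(\zeta\in dt)\propto t^{\rho-2}\,dt$. Your write-up is in fact more detailed than the paper's, and your flagged caveats (the meander/excursion identification via \cite{CD10,Chaumont97}, the one-sided regularity cases, and the $\le$ versus $<$ issue for $s^*$) are exactly the points one must check.
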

\quad Finally, by doing similar calculations as in Pitman and Tang \cite[Section 3.4]{PTargmin}, we obtain the Feller transition semigroup \eqref{Qtrans} for $X$. 

\bibliographystyle{plain}
\bibliography{unique}

\begin{thebibliography}{10}

\bibitem{Baxter}
Glen Baxter.
\newblock Combinatorial methods in fluctuation theory.
\newblock {\em Z. Wahrscheinlichkeitstheorie und Verw. Gebiete}, 1:263--270,
  1962/1963.

\bibitem{Bertoin}
Jean Bertoin.
\newblock {\em L\'evy processes}, volume 121 of {\em Cambridge Tracts in
  Mathematics}.
\newblock Cambridge University Press, Cambridge, 1996.

\bibitem{Chaumont97}
L.~Chaumont.
\newblock Excursion normalis\'ee, m\'eandre et pont pour les processus de
  {L}\'evy stables.
\newblock {\em Bull. Sci. Math.}, 121(5):377--403, 1997.

\bibitem{CD10}
L.~Chaumont and R.~A. Doney.
\newblock Invariance principles for local times at the maximum of random walks
  and {L}\'evy processes.
\newblock {\em Ann. Probab.}, 38(4):1368--1389, 2010.

\bibitem{Denisov}
I.~V. Denisov.
\newblock A random walk and a wiener process near a maximum.
\newblock {\em Theory of Probability \& Its Applications}, 28(4):821--824,
  1984.

\bibitem{Feller}
William Feller.
\newblock {\em An introduction to probability theory and its applications.
  {V}ol. {I}}.
\newblock Third edition. John Wiley \& Sons Inc., New York, 1968.

\bibitem{Fellervol2}
William Feller.
\newblock {\em An introduction to probability theory and its applications.
  {V}ol. {II}.}
\newblock Second edition. John Wiley \& Sons, Inc., New York-London-Sydney,
  1971.

\bibitem{Millar78}
P.~W. Millar.
\newblock A path decomposition for {M}arkov processes.
\newblock {\em Ann. Probability}, 6(2):345--348, 1978.

\bibitem{MS79}
Ditlev Monrad and Martin~L. Silverstein.
\newblock Stable processes: sample function growth at a local minimum.
\newblock {\em Z. Wahrsch. Verw. Gebiete}, 49(2):177--210, 1979.

\bibitem{PTargmin}
Jim Pitman and Wenpin Tang.
\newblock The argmin process of {B}rownian motion and {B}rownian extrema of a
  given length.
\newblock {\em arXiv:1610.01524}, 2016.

\bibitem{Sato}
Ken-iti Sato.
\newblock {\em L\'evy processes and infinitely divisible distributions},
  volume~68 of {\em Cambridge Studies in Advanced Mathematics}.
\newblock Cambridge University Press, Cambridge, 1999.
\newblock Translated from the 1990 Japanese original, Revised by the author.

\bibitem{Sharpe}
Michael Sharpe.
\newblock {\em General theory of {M}arkov processes}, volume 133 of {\em Pure
  and Applied Mathematics}.
\newblock Academic Press, Inc., Boston, MA, 1988.

\bibitem{SA}
Erik Sparre~Andersen.
\newblock On sums of symmetrically dependent random variables.
\newblock {\em Skand. Aktuarietidskr.}, 36:123--138, 1953.

\bibitem{Spitzer}
Frank Spitzer.
\newblock A combinatorial lemma and its application to probability theory.
\newblock {\em Trans. Amer. Math. Soc.}, 82:323--339, 1956.

\bibitem{UB14}
Ger{\'o}nimo Uribe~Bravo.
\newblock Bridges of {L}\'evy processes conditioned to stay positive.
\newblock {\em Bernoulli}, 20(1):190--206, 2014.

\bibitem{Zolo}
V.~M. Zolotarev.
\newblock {\em One-dimensional stable distributions}, volume~65 of {\em
  Translations of Mathematical Monographs}.
\newblock American Mathematical Society, Providence, RI, 1986.
\newblock Translated from the Russian by H. H. McFaden, Translation edited by
  Ben Silver.

\end{thebibliography}
\end{document}